\newtheorem{theorem}{Theorem}[section]
\newtheorem{lemma}[theorem]{Lemma}
\newtheorem{cor}[theorem]{Corollary}
\theoremstyle{definition}
\theoremstyle{remark}
\newtheorem{remark}[theorem]{Remark}
\numberwithin{equation}{section}
\newcommand{\C}{\mathbb{C}}
\newcommand{\R}{\mathbb{R}}
\title[Classification of quasitoric manifolds]{Remarks on the classification of quasitoric manifolds up to equivariant homeomorphism}
\author{Michael Wiemeler}
\address{School of Mathematics,
    Alan Turing Building, The University of Manchester, Oxford Road, Manchester M13 9PL, UK}
\email{michael.wiemeler-2@manchester.ac.uk}
\thanks{Part of the research was supported by SNF Grants Nos. 200021-117701, 200020-126795 and PBFR2-133466.}
\subjclass[2000]{Primary 57S15; Secondary 57R19, 57R90.}
\keywords{quasitoric manifolds, equivariant classification}
\begin{document}
\begin{abstract}
  We give three sufficient criteria for two quasitoric manifolds \(M,M'\) to be (weakly) equivariantly homeomorphic.
  We apply these criteria to count the weakly equivariant homeomorphism types of quasitoric manifolds with a given cohomology ring.
\end{abstract}

\maketitle


\section{Introduction}

In topology one studies invariants of topological spaces.
Examples of such invariants are the cohomology ring of a space or the bordism type of a  manifold.
If two spaces have different invariants one knows that these spaces are not homeomorphic.
There is also the inverse problem:
Are two spaces having the same invariants homeomorphic?

In this note we study certain invariants of so called quasitoric manifolds and give a positive answer to the above question for these invariants.

Quasitoric manifolds were introduced by Davis and Januszkiewicz \cite{davis91:_convex_coxet}.
A quasitoric manifold is a \(2n\)-dimensional manifold on which an \(n\)-dimensional torus acts such that the orbit space is a simple polytope.

We give three sufficient criteria for two quasitoric manifolds \(M,M'\) to be (weakly) equivariantly homeomorphic.
The first criterion gives a condition on the cohomology of \(M\) and \(M'\) (see section~\ref{sec:iso}).
We apply this criterion to count the weakly equivariant homeomorphism types of quasitoric manifolds with the same cohomology ring as \(\C P^n\#\C P^n\), \(\C P^n\#\bar{\C P}^n\) with \(n > 2\). Moreover, we show that a quasitoric manifold with the same cohomology as \(\prod_{i=1}^l\C P^{n_i}\), \(n_i>1\) for all \(i\), is weakly equivariantly homeomorphic to \(\prod_{i=1}^l\C P^{n_i}\).

The stable tangent bundle of a quasitoric manifold \(M\) splits as a sum of complex line bundles.
This induces a \(BT^m\)-structure on the stable tangent bundle of \(M\).
We show in section~\ref{sec:bordism} that two \(BT^m\)-bordant quasitoric manifolds are weakly equivariantly homeomorphic.

In section~\ref{sec:gkm} we show that two quasitoric manifolds having the same GKM-graphs are equivariantly homeomorphic.

In this paper we take all cohomology groups with coefficients in \(\mathbb{Z}\).

The results presented in this note form part of the outcome of my Ph.D. thesis \cite{wiemeler_phd} written under the supervision of Prof. Anand Dessai at the University of Fribourg.
I would like to thank Anand Dessai for helpful discussions.
I would also like to thank Nigel Ray for comments on an earlier version of this paper.

\section{Isomorphisms of cohomology rings}
\label{sec:iso}

At first we introduce some notations concerning quasitoric manifolds and their characteristic functions.
We follow \cite{semifree} for this description.
A quasitoric manifold is a \(2n\)-dimensional manifold with a locally standard action of an \(n\)-dimensional torus \(T\) such that the orbit space \(M/T\) is face preserving homeomorphic to an \(n\)-dimensional simple polytope \(P\).
We denote the orbit map by \(\pi:M\rightarrow P\).
Furthermore we denote the set of facets of \(P\) by \(\mathfrak{F}=\{F_1,\dots,F_m\}\).
The \emph{characteristic submanifolds} \(M_i=\pi^{-1}(F_i)\), \(i=1,\dots,m\), of \(M\) are the preimages of the facets of \(P\).
Each \(M_i\) is fixed pointwise by a one-dimensional subtorus \(\lambda(F_i)=\lambda(M_i)\) of \(T\). 

Recall that two simple polytopes are called \emph{combinatorially equivalent} if there is an inclusion preserving bijection between their face lattices.
It is said that two simple polytopes have the same combinatorial type if they are combinatorially equivalent.

The following lemma was proved by Davis and Januszkiewicz~\cite[p. 424]{davis91:_convex_coxet}:
\begin{lemma}
  A quasitoric manifold \(M\) with \(P=M/T\) is determined up to equivariant homeomorphism by the combinatorial type of \(P\) and the function \(\lambda\).
\end{lemma}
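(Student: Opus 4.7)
The plan is to construct from the combinatorial data \((P,\lambda)\) a canonical model
\[
X(P,\lambda) := (T\times P)/{\sim},
\]
where \((t,p)\sim(t',p')\) iff \(p=p'\) and \(t^{-1}t'\in T_{F(p)}\). Here \(F(p)\) is the unique face of \(P\) containing \(p\) in its relative interior, and \(T_F\subseteq T\) is the subtorus generated by \(\{\lambda(F_i) : F\subseteq F_i\}\). The torus \(T\) acts on \(X(P,\lambda)\) by multiplication on the first factor, and this space depends only on the combinatorial type of \(P\) and on \(\lambda\). It therefore suffices to produce a \(T\)-equivariant homeomorphism \(\Phi : X(P,\lambda) \to M\) covering the identity on \(P\).

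The key step is the construction of a continuous section \(s : P \to M\) of the orbit map \(\pi\). I would build \(s\) by induction on the dimension of the faces of \(P\). Each vertex \(v\) is a \(T\)-fixed point, so \(s(v)\) is forced. Local standardness supplies a \(T\)-stable neighbourhood of \(\pi^{-1}(v)\) in \(M\) that is equivariantly homeomorphic, via an automorphism of \(T\) determined by the values of \(\lambda\) on the facets meeting at \(v\), to a neighbourhood of \(0\) in \(\C^n\) with the standard action; this supplies a canonical local section near \(v\). Having defined \(s\) on the \((k-1)\)-skeleton of \(P\), one extends it over the relative interior of each \(k\)-face \(F\) by using that \(\pi^{-1}(\interior F)\to \interior F\) is a principal \((T/T_F)\)-bundle over a contractible base, hence trivial; the section on \(\partial F\) then extends over \(\interior F\).

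Given \(s\), define \(\Phi : X(P,\lambda) \to M\) by \(\Phi([t,p]) = t\cdot s(p)\). This is well defined because the \(T\)-isotropy of \(s(p)\) equals \(T_{F(p)}\), which matches the relation \(\sim\); it is manifestly \(T\)-equivariant and continuous, and it covers the identity on \(P\). Surjectivity holds because every \(T\)-orbit meets \(s(P)\); injectivity follows from the agreement of isotropy groups on both sides. Since \(X(P,\lambda)\) is compact and \(M\) is Hausdorff, \(\Phi\) is a homeomorphism.

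The main obstacle is the inductive construction of the section \(s\): extending over each open face is straightforward, but the extensions from adjacent faces must patch continuously along their common boundary. This compatibility requires the local trivialisations around lower-dimensional strata to be chosen coherently, and it is precisely where local standardness, rather than only the stratified orbit structure, is essential. Once \(s\) has been produced, the verification that \(\Phi\) is an equivariant homeomorphism is formal.
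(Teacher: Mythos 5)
The paper gives no proof of this lemma, citing Davis and Januszkiewicz instead; your argument --- building the canonical model \((T\times P)/\sim\) from \((P,\lambda)\) and identifying \(M\) with it via a section of the orbit map \(\pi\) constructed inductively over the skeleta of \(P\) --- is precisely the proof of Lemma 1.4 in that reference. The one difficulty you flag, namely the coherent patching of the section along lower-dimensional faces using local standardness, is exactly the point handled there, so your proposal is correct and takes essentially the same route as the cited source.
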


Let \(N\) be the integer lattice of one-parameter circle subgroups in \(T\), so we have \(N\cong \mathbb{Z}^n\).
We denote by  \(\bar{\lambda}: \mathfrak{F}\rightarrow N\) the characteristic function of \(M\).
Then, for a given  facet \(F_i\) of \(P\), \(\bar{\lambda}(F_i)\) is a primitive vector that spans \(\lambda(F_i)\).
The vector \(\bar{\lambda}(F_i)\) is determined up to sign by this condition.

An \emph{omniorientation} of \(M\) is a choice of orientations for each \(M_i\) and \(M\).
It helps to eliminate the indeterminateness in the definition of a characteristic function.
This is done as follows:
An omniorientation of \(M\) determines orientations for all normal bundles of the characteristic submanifolds of \(M\).
The action of a one-parameter circle subgroup of \(T\) also determines orientations for these bundles.
We choose the primitive vectors \(\bar{\lambda}(F_i)\) in such a way that the two orientations on \(N(M_i,M)\) coincide.

A characteristic function satisfies the following non-singularity condition.
For pairwise distinct facets \(F_{j_1},\dots,F_{j_n}\) of \(P\),
\begin{equation*}
  \bar{\lambda}(F_{j_1}),\dots,\bar{\lambda}(F_{j_n})
\end{equation*}
forms a basis of \(N\) whenever the intersection
\begin{equation*}
  F_{j_1}\cap \dots \cap F_{j_n}
\end{equation*}
is non-empty.
After reordering the facets we may assume that
\begin{equation*}
  F_{1}\cap \dots \cap F_{n}\neq \emptyset.
\end{equation*}
Therefore   \(\bar{\lambda}(F_{1}), \dots ,\bar{\lambda}( F_{n})\) is a basis of \(N\).
This allows us to identify \(N\) with \(\mathbb{Z}^n\).
This identification induces an identification of the torus \(T\) with the standard \(n\)-dimensional torus \(\mathbb{R}^n/\mathbb{Z}^n\).

With this identifications understood, we may write \(\bar{\lambda}\) as an integer matrix of the form
\begin{equation}
\label{eq:matrix}
\Lambda=
  \begin{pmatrix}
    1 & & & \lambda_{1,n+1}& \dots & \lambda_{1,m}\\
      &  \ddots& &\vdots& &\vdots\\
      & &1&\lambda_{n,n+1}&\dots&\lambda_{n,m}
  \end{pmatrix}.
\end{equation}
With this notation \(\lambda(F_i)\), \(i=1,\dots,m\) is given by
\begin{equation*}
  \left\{t
    \begin{pmatrix}
      \lambda_{1,i}\\
      \vdots\\
      \lambda_{n,i}
    \end{pmatrix}
\in \mathbb{R}^n/\mathbb{Z}^n; t\in \mathbb{R}
\right\}.
\end{equation*}

Let \(u_i\in H^2(M)\) be the Poincar\'e-dual of the characteristic submanifold \(M_i\). Then the cohomology ring \(H^*(M)\) is generated by \(u_1,\dots,u_m\).
The \(u_i\) are subject to the following relations \cite[p. 439]{davis91:_convex_coxet}:
\begin{enumerate}
\item \(\forall I\subset \{1,\dots,m\}\) \(\prod_{i\in I}u_i=0\Leftrightarrow \bigcap_{i\in I}F_i =\emptyset\)
\item For \(i=1,\dots,n\)  \(-u_i=\sum_{j=n+1}^m\lambda_{i,j}u_j\).
\end{enumerate}

Two quasitoric manifolds \(M,M'\) are \emph{weakly \(T\)-equivariantly homeomorphic} if there is an automorphism \(\theta: T\rightarrow T\) and a homeomorphism \(f:M\rightarrow M'\) such that for all \(x\in M\) and \(t\in T\):
\begin{equation*}
  f(tx)=\theta(t)f(x).
\end{equation*}
Because the identification of \(T\) with \(\R^n/\mathbb{Z}^n\) depends on a choice of a basis in \(N\) a quasitoric manifold \(M\) is determined by the combinatorial type of \(P\) and the characteristic matrix \(\Lambda\) only up to weakly equivariant homeomorphism.

Now we are in the position to prove our first theorem.
\begin{theorem}
\label{sec:isom-cohom-rings}
  Let \(M,M'\) be quasitoric manifolds of dimension \(n\).
  Furthermore let \(u_1,\dots,u_m\in H^2(M)\) be the Poincar\'e-duals of the characteristic submanifolds of \(M\) and  \(u_1',\dots,u_{m'}'\in H^2(M')\) the Poincar\'e-duals of the characteristic submanifolds of \(M'\).
  If there is a ring isomorphism \(f:H^*(M)\rightarrow H^*(M')\) and a permutation \(\sigma:\{1,\dots,m'\}\rightarrow \{1,\dots,m'\}\) with \(f(u_i)=\pm u_{\sigma(i)}'\), \(i=1,\dots,m\), then \(M\) and \(M'\) are weakly T-equivariantly homeomorphic.
\end{theorem}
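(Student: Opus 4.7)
The plan is to invoke the Davis--Januszkiewicz lemma quoted above, which reduces the theorem to producing (i) a combinatorial equivalence between \(P=M/T\) and \(P'=M'/T\) and (ii) a matching of the two characteristic functions, up to an automorphism of the lattice \(N\) (which then realises the \(T\)-automorphism \(\theta\) in the definition of weak equivariance) and up to signs on the primitive generators (absorbed by a change of omniorientation, which does not affect the subtorus \(\lambda(F_i)\) itself). So the task is to read off this combinatorial and lattice data from the ring isomorphism \(f\).

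First I would note \(m=m'\): relation (2) gives \(\rank H^2(M)=m-n\), and symmetrically for \(M'\), while \(f\) is an isomorphism. Then I would use relation (1) to match the face lattices. For any \(I\subset\{1,\dots,m\}\) one has \(\prod_{i\in I}u_i=0\) in \(H^*(M)\) iff \(\bigcap_{i\in I}F_i=\emptyset\); applying \(f\) and the hypothesis \(f(u_i)=\pm u'_{\sigma(i)}\) one sees this is equivalent to \(\bigcap_{i\in I}F'_{\sigma(i)}=\emptyset\). Since the face lattice of a simple polytope is determined by its nerve, \(\sigma\) already induces a combinatorial equivalence \(P\to P'\).

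To match the characteristic functions I would use the standard presentation of \(H^2\). Relations (1) and (2) assemble into the short exact sequence
\begin{equation*}
0\longrightarrow \mathbb{Z}^n\xrightarrow{\Lambda^T}\mathbb{Z}^m\longrightarrow H^2(M)\longrightarrow 0,
\end{equation*}
and similarly with \(\Lambda'\) for \(M'\). Writing \(f(u_i)=\varepsilon_i u'_{\sigma(i)}\) with \(\varepsilon_i\in\{\pm 1\}\), the signed permutation of \(\mathbb{Z}^m\) sending \(e_i\) to \(\varepsilon_i e_{\sigma(i)}\) covers \(f|_{H^2}\). Being an isomorphism, it must identify \(\image \Lambda^T\) with \(\image(\Lambda')^T\), and by exactness this identification is realised on \(\mathbb{Z}^n\) by a unique \(A\in GL(n,\mathbb{Z})\). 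Reading the resulting matrix identity column-by-column yields \(\bar\lambda'(F'_{\sigma(i)})=\varepsilon_i A^T\bar\lambda(F_i)\), which is precisely the required identification.

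The main point to verify carefully is the commutative-diagram step: one checks that the signed permutation of \(\mathbb{Z}^m\) genuinely fits between the two exact sequences (which it does because \(f\) is a ring homomorphism and the lifts \(e_i\mapsto u_i\), \(e_i\mapsto u'_i\) are intertwined by \(\sigma\) and the \(\varepsilon_i\)) and that the induced map of kernels is integral (automatic since the signed permutation is integral and invertible). With the combinatorial equivalence \(\sigma\), the lattice automorphism \(A^T\in GL(n,\mathbb{Z})\), and the signs \(\varepsilon_i\) in hand, the Davis--Januszkiewicz lemma delivers the desired weakly equivariant homeomorphism \(M\to M'\).
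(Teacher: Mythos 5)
Your proposal is correct and follows essentially the same route as the paper: relation (1) yields the combinatorial equivalence of the orbit polytopes, and the linear relations (2) in \(H^2\) force the characteristic data to agree, after which the Davis--Januszkiewicz lemma finishes the argument. The only difference is presentational: the paper first normalises (reorders the facets, flips omniorientations, and puts \(\Lambda\) in the form \((\mathrm{I}\mid *)\)) so that it can compare matrix entries \(\lambda_{i,j}=\lambda'_{i,j}\) directly, whereas you keep \(\sigma\), the signs \(\varepsilon_i\), and the lattice automorphism \(A\in GL(n,\mathbb{Z})\) explicit via the presentation \(0\to\mathbb{Z}^n\to\mathbb{Z}^m\to H^2(M)\to 0\).
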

\begin{proof}
  After reordering the \(u_i'\), we may assume that \(f(u_i)=\pm u_i'\) for \(i=1,\dots,m\).
  After changing the orientations of the characteristic submanifolds of \(M'\), we may assume that \(f(u_i)=u_i'\).

  At first notice that \(f\) preserves the grading of \(H^*(M)\) and
  \begin{equation*}
    m=b_2(M)+n=b_2(M')+n=m'.
  \end{equation*}
For \(I\subset \{1,\dots,m\}\), we have
\begin{align*}
  &\bigcap_{i\in I}F_i =\emptyset\\
  \Leftrightarrow& \prod_{i\in I} u_i =0\\
  \Leftrightarrow& \prod_{i\in I} u_i'=\prod_{i\in I}f( u_i) =0\\
  \Leftrightarrow&\bigcap_{i\in I}F_i' =\emptyset.
\end{align*}
Here \(F_i,F_i'\) denote the facets of \(M/T\) and \(M'/T\), respectively.
Therefore \(M/T\) and \(M'/T\) are combinatorially equivalent.

Now we show that the characteristic matrices of \(M\) and \(M'\) are equal.
We may assume that \(F_1\cap\dots\cap F_n\neq \emptyset\neq F_1'\cap\dots\cap F_n'\).
Then \(u_{n+1},\dots, u_m\) forms a basis of \(H^2(M)\) and \(u_{n+1}',\dots, u_m'\) a basis of \(H^2(M')\).

If we write the characteristic matrices \(\Lambda,\Lambda'\) for \(M,M'\) in the form~(\ref{eq:matrix}) then we have
\begin{align*}
  -u_i =& \sum_{j=n+1}^m \lambda_{i,j}u_j\\
 -u_i' =& \sum_{j=n+1}^m \lambda_{i,j}'u_j'
\end{align*}
for \(i=1,\dots,n\).
Therefore we have
\begin{equation*}
   \sum_{j=n+1}^m \lambda_{i,j}'u_j'= -u_i' =  f( -u_i) = \sum_{j=n+1}^m \lambda_{i,j}f(u_j)=  \sum_{j=n+1}^m \lambda_{i,j}u_j'.
\end{equation*}
It follows that \(\lambda_{i,j}'=\lambda_{i,j}\), \(i=1,\dots,n\), \(j=n+1,\dots,m\).
Therefore the characteristic matrices are the same.
\end{proof}

\begin{remark}
\label{sec:isom-cohom-rings-1}
  If \(M,M'\) are two weakly \(T\)-equivariant homeomorphic quasitoric manifolds, then there is an isomorphism \(H^*(M)\rightarrow H^*(M')\) with the properties described in Theorem~\ref{sec:isom-cohom-rings} because a weakly \(T\)-equivariant homeomorphism \(M\rightarrow M'\) maps the characteristic submanifolds of \(M\) homeomorphicly on the characteristic submanifolds of \(M'\).
\end{remark}

\begin{cor}
  Let \(\alpha(n)\), \(\bar{\alpha}(n)\) be the number of quasitoric manifolds up to weakly equivariant homeomorphism which have the same cohomology ring as \(\C P^n\#\C P^n\) or \(\C P^n\# \bar{\C P}^n\), respectively.
If \(n \geq 3\), we have
\begin{equation*}
  \alpha(n)=
  \begin{cases}
    \frac{n+1}{2}& \text{if } n \equiv 1 \mod 2\\
    \frac{n}{4}& \text{if } n \equiv 0 \mod 4\\
    \frac{n+2}{4}&\text{if } n \equiv 2 \mod 4,
  \end{cases}
\end{equation*}
\begin{equation*}
  \bar{\alpha}(n)=
  \begin{cases}
    \frac{n+1}{2}& \text{if } n \equiv 1 \mod 2\\
    \frac{n}{4}+1& \text{if } n \equiv 0 \mod 4\\
    \frac{n+2}{4}&\text{if } n \equiv 2 \mod 4.
  \end{cases}
\end{equation*}
\end{cor}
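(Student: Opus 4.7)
I would first identify the orbit polytope \(P\). Since \(b_2(\C P^n\#\C P^n)=b_2(\C P^n\#\bar{\C P}^n)=2\), any quasitoric manifold \(M\) with such a cohomology ring has \(m=b_2+n=n+2\) facets. Simple \(n\)-polytopes with \(n+2\) facets are precisely the products \(\Delta^a\times\Delta^{n-a}\), whose Stanley--Reisner ideal is generated by two squarefree monomials of degrees \(a+1\) and \(n-a+1\). The relation \(xy=0\) in \(H^*(M)\) forces a size-\(2\) minimal non-face, so \(a=1\) (or \(n-a=1\)), and hence \(P\cong\Delta^{n-1}\times I\). I would then fix a vertex and normalize \(\Lambda=(I_n\mid c_1\mid c_2)\); checking non-singularity at the remaining \(2n-1\) vertices of \(P\) forces every entry of \(c_1\) and \(c_2\) to lie in \(\{\pm 1\}\). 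Writing \(c_1=(a_i)\), \(c_2=(b_i)\), and setting \(x=u_{n+1}\), \(y=u_{n+2}\), the linear relations yield \(u_i=-(a_ix+b_iy)\) for \(i\leq n\), and the relation \(u_1\cdots u_n=0\) reduces modulo \(xy=0\) to
\begin{equation*}
\Bigl(\prod_{i=1}^n a_i\Bigr)x^n+\Bigl(\prod_{i=1}^n b_i\Bigr)y^n=0.
\end{equation*}
Hence \(H^*(M)\cong\mathbb{Z}[x,y]/(xy,\,x^n+y^n)\) (the ring of \(\C P^n\#\bar{\C P}^n\)) when the two products agree, and \(H^*(M)\cong\mathbb{Z}[x,y]/(xy,\,x^n-y^n)\) (the ring of \(\C P^n\#\C P^n\)) otherwise; for \(n\) odd the substitution \(y\mapsto-y\) identifies these two rings, whereas for \(n\) even an inspection of zero divisors in degree two shows they are non-isomorphic.

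Next I would reduce the equivalences. By Theorem~\ref{sec:isom-cohom-rings} and Remark~\ref{sec:isom-cohom-rings-1}, the desired count equals the number of characteristic matrices modulo the combined action of \(GL_n(\mathbb{Z})\), independent sign changes on each column, and the combinatorial symmetry group \(S_n\times\mathbb{Z}/2\) of \(\Delta^{n-1}\times I\) (permuting the \(n\) sides and swapping the two caps). Using row flips I normalize \(c_1=(1,\ldots,1)^T\); this replaces \(c_2\) by \(\delta=(\delta_i)_{i=1}^n\) with \(\delta_i:=a_ib_i\in\{\pm 1\}\). A direct check then shows that the residual actions on \(\delta\) are precisely the \(S_n\) permutation of coordinates and the single \(\mathbb{Z}/2\) action \(\delta_i\mapsto-\delta_i\) (arising equally from a sign change on \(c_1\) alone or on \(c_2\) alone), while the cap swap and the simultaneous sign change of both columns act trivially on \(\delta\). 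The count therefore reduces to the number of orbits of \(q:=\#\{i:\delta_i=-1\}\in\{0,1,\ldots,n\}\) under \(q\mapsto n-q\), and the parity of \(q\) distinguishes the two cohomology rings.

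It remains to perform the case analysis. For \(n\) odd the involution \(q\mapsto n-q\) swaps parities, and the two candidate rings coincide, so \(\alpha(n)=\bar\alpha(n)\) equals the total number of orbits, namely \((n+1)/2\). For \(n\) even the involution preserves the parity of \(q\); the unique fixed point \(q=n/2\) has even parity when \(n\equiv 0\pmod 4\) (producing the extra orbit in \(\bar\alpha(n)=n/4+1\)) and odd parity when \(n\equiv 2\pmod 4\) (producing \(\alpha(n)=\bar\alpha(n)=(n+2)/4\)); the remaining pairs \(\{q,n-q\}\) are counted routinely to give the stated formulas. The main difficulty is in the second paragraph: bookkeeping all the residual equivalences after normalization, and in particular verifying that the cap swap really acts trivially on the normalized \(\delta\) rather than introducing a further identification among orbits.
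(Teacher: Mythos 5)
Your overall strategy (identify the orbit polytope as \(\Delta^{n-1}\times I\), reduce the data to a sign vector \(\delta\in\{\pm1\}^n\), and count orbits of \(q=\#\{i:\delta_i=-1\}\) under \(q\mapsto n-q\), separated by the parity of \(q\)) runs parallel to the paper's, and your final case analysis reproduces the stated formulas. But there is a genuine gap in your first paragraph: the claim that non-singularity of \(\Lambda=(I_n\mid c_1\mid c_2)\) at the remaining vertices forces every entry of \(c_1\) and \(c_2\) to lie in \(\{\pm 1\}\) is false. Non-singularity only constrains the \(n\times n\) minors indexed by vertices, i.e.\ by collections of \(n-1\) side facets together with one cap, and these leave several entries completely unconstrained. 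Concretely, on the triangular prism the assignment \(\lambda(S_1)=e_1\), \(\lambda(S_2)=e_2\), \(\lambda(S_3)=(1,1,2)^t\), \(\lambda(T_0)=e_3\), \(\lambda(T_1)=(1,1,1)^t\) satisfies the non-singularity condition at all six vertices, and analogous characteristic matrices with arbitrarily large entries exist over \(\Delta^{n-1}\times I\) for every \(n\) (they yield projective bundles, not connected sums). The paper obtains the bound \(|\alpha_j|=|\beta_j|=1\) not from non-singularity but from the cohomological hypothesis, via the intersection numbers \(\langle u_iu_{j_1}\cdots u_{j_{n-1}},[M]\rangle=\pm1\) for facets meeting at a vertex. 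Without invoking \(H^*(M)\cong H^*(\C P^n\#\C P^n)\) at this step you cannot justify the reduction to \(\delta\in\{\pm1\}^n\), and in particular you cannot exclude additional weak equivalence classes arising from matrices with large entries.

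Two smaller points. First, your normalization is internally inconsistent: if \(I_n\) occupies the columns of the \(n\) facets through a fixed vertex, those are \(n-1\) side facets plus one cap and then \(u_1\cdots u_n\neq 0\); whereas your relation \(u_1\cdots u_n=0\) requires columns \(1,\dots,n\) to be the \(n\) side facets, whose characteristic vectors form a basis of \(N\) only once the cohomological analysis shows that the two cap classes span \(H^2(M)\). Second, two steps are asserted rather than proved: that a size-two minimal non-face is forced by the existence of \(x,y\in H^2(M)\) with \(xy=0\) (this is essentially the content of the result the paper cites for the identification of the orbit polytope), and that "a direct check" shows the residual action on \(\delta\) is exactly \(S_n\times\mathbb{Z}/2\). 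For the latter the paper's route is cleaner and is what you would in effect have to carry out: compute the automorphism group of \(H^*(\C P^n\#\C P^n)\) (two generators, differing according to the parity of \(n\)) and track its effect on the invariant \(k_M\).
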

\begin{proof}
  Let \(M\) be a quasitoric manifold with
  \begin{equation*}
    H^*(M)=H^*(\C P^n \#\C P^n)=\mathbb{Z}[a,b]/(ab, a^n-b^n).   
  \end{equation*}
Then, by Theorem 5.3 of \cite[p. 353]{pre05797869}, the orbit polytope of \(M\) is \([0,1]\times \Delta^n\), where \(\Delta^n\) is the \(n\)-dimensional simplex.

Let \(u_1,u_2\in H^2(M)\) be the Poincar\'e-duals of \(\pi^{-1}(\{0\}\times \Delta^n)\) and \(\pi^{-1}(\{1\}\times \Delta^n)\), respectively.
Moreover, let \(u_3,\dots,u_{n+2}\in H^*(M)\) be the Poincar\'e-duals of the other characteristic submanifolds of \(M\).
Let \(\alpha_1,\dots,\alpha_{n+2}\in \mathbb{Z}\) and \(\beta_1,\dots,\beta_{n+2}\in \mathbb{Z}\) such that
\begin{equation*}
  u_i=\alpha_i a + \beta_i b.
\end{equation*}
Because \(0=u_1u_2=\alpha_1\alpha_2 a^2 + \beta_1\beta_2 b^2\) and \(n\geq 3\), we may assume that \(\alpha_2=\beta_1=0\).
Because
\begin{equation*}
  \alpha_i\prod_{k=1}^{n-1}\alpha_{j_k} + \beta_i\prod_{k=1}^{n-1}\beta_{j_k}=\langle u_i u_{j_1}\dots u_{j_{n-1}},[M]\rangle = \pm 1,
\end{equation*}
for \(i\in \{1,2\}\) and \(j_k\in \{3,\dots,n+2\}\), \(j_{k_1}\neq j_{k_2}\) for \(k_1\neq k_2\), we see that \(|\alpha_j|=|\beta_j|=1\) for \(j\geq 3\) and \(\alpha_1=\pm 1\), \(\beta_2=\pm 1\).

After changing the orientations of the characteristic submanifolds of \(M\) we may assume that
\begin{align*}
  \alpha_1&=\beta_2=1 & \alpha_j&=1 \text{ for } j\geq 3.
\end{align*}
Let \(k_M=\#\{j\in \{3,\dots,n+2\};\;\beta_j=-1\}\).
Then we have
\begin{equation*}
  a^n + (-1)^{k_M}b^n =u_3\dots u_{n+2}=0.
\end{equation*}
Therefore \(0\leq k_M\leq n\) is odd.

\emph{Claim.} Let \(M'\) be another quasitoric manifold with \(H^*(M')=H^*(\C P^n \# \C P^n)\). Then \(M\) and \(M'\) are weakly \(T\)-equivariantly homeomorphic if and only if \(k_M=k_{M'}\) if \(n\) is odd or \(k_M=k_{M'}\) or \(k_M=n-k_{M'}\) if \(n\) is even.

By Theorem~\ref{sec:isom-cohom-rings} and Remark~\ref{sec:isom-cohom-rings-1}, \(M\) and \(M'\) are weakly \(T\)-equivariantly homeomorphic if and only if there is an automorphism \(f\) of \(H^*(\C P^n\# \C P^n)\) and a permutation \(\sigma:\{1,\dots,n+2\}\rightarrow\{1,\dots,n+2\}\) such that \(f(u_i)= \pm u_{\sigma(i)}'\).

If \(k_M=k_{M'}\), we may take \(f\) to be the identity and \(\sigma\) a suitable permutation to see that \(M\) and \(M'\) are weakly equivariantly homeomorphic.

Now assume that \(M,M'\) are weakly \(T\)-equivariantly homeomorphic.
We first discuss the case where \(n\) is odd.
Then the automorphism group of \(H^*(\C P^n\#\C P^n)\) is generated by two automorphisms \(f_1,f_2\) with 
\begin{align*}
  f_1(a)&=b&f_1(b)&=a\\
  f_2(a)&=-a&f_2(b)&=-b.
\end{align*}
Because
\begin{equation*}
  f_1(u_i)=
  \begin{cases}
    u_2&\text{if } i=1\\
    u_1&\text{if } i=2\\
    -u_i&\text{if } i\geq 3 \text{ and } \beta_i =-1\\
    u_i&\text{if } i\geq 3 \text{ and } \beta_i =1
  \end{cases}
\end{equation*}
and \(f_2(u_i)=-u_i\), we see that \(k_M=k_{M'}\).

Now we turn to the case where \(n\) is even.
Then the automorphism group of \(H^*(\C P^n\#\C P^n)\) is generated by two automorphisms \(f_1,f_2\) with
\begin{align*}
  f_1(a)&=b&f_1(b)&=a\\
  f_2(a)&=a&f_2(b)&=-b.
\end{align*}
Because
\begin{equation*}
  f_1(u_i)=
  \begin{cases}
    u_2&\text{if } i=1\\
    u_1&\text{if } i=2\\
    -u_i&\text{if } i\geq 3 \text{ and } \beta_i =-1\\
    u_i&\text{if } i\geq 3 \text{ and } \beta_i =1
  \end{cases}
\end{equation*}
and 
\begin{equation*}
  f_2(u_i)=
  \begin{cases}
    u_1&\text{if } i=1\\
    -u_2&\text{if } i=2\\
    a+b&\text{if } i\geq 3 \text{ and } \beta_i =-1\\
    a-b&\text{if } i\geq 3 \text{ and } \beta_i =1,
  \end{cases}
\end{equation*}
 we see that \(k_M=k_{M'}\) or \(k_M=n-k_{M'}\).

It follows from our claim that
\begin{equation*}
  \alpha(n)\leq
  \begin{cases}
    \frac{n+1}{2}& \text{if } n \equiv 1 \mod 2\\
    \frac{n}{4}& \text{if } n \equiv 0 \mod 4\\
    \frac{n+2}{4}&\text{if } n \equiv 2 \mod 4.
  \end{cases}
\end{equation*}

The only thing which remains to be proven is that for each \(0\leq k\leq n\) with \(k\) odd there is a quasitoric manifold \(M\) with \(k_M=k\) and \(H^*(M)=H^*(\C P^n\#\C P^n)\).

As we constructed a characteristic function for a quasitoric manifold, we may also construct a quasitoric manifold from a polytope \(P\) and a function \(\bar{\lambda}:\mathfrak{F}\rightarrow N\cong \mathbb{Z}^n\) satisfying a non-singularity condition \cite[p. 423]{davis91:_convex_coxet}.

Let \(M\) be the quasitoric manifold over \([0,1]\times \Delta^n\) defined by the characteristic function
\begin{align*}
  \bar{\lambda}(\{0\}\times \Delta^n)&=(1,\dots,1)^t\\
\bar{\lambda}(\{1\}\times \Delta^n)&=(-1,\dots,-1,1,\dots,1)^t\\
\bar{\lambda}([0,1]\times F_i)&=(0,\dots,0,-1,0,\dots,0)^t,
\end{align*}
where \(k\) entries of \(\bar{\lambda}(\{1\}\times \Delta^n)\) are equal to \(-1\), the \(F_i\) are the facets of \(\Delta^n\) and the \(i\)th entry of \(\bar{\lambda}([0,1]\times F_i)\) is equal to \(-1\).

Then we have
\begin{align*}
  H^*(M)&=\mathbb{Z}[a,b]/(ab, (a-b)^k(a+b)^{n-k})\\
&= \mathbb{Z}[a,b]/(ab, a^n -b^n)\\
&=H^*(\C P^n\# \C P^n)
\end{align*}
and \(k_M=k\).

The case where \(H^*(M)=H^*(\C P^n \# \bar{\C P}^n)\) is similar.
The only difference in this case is that \(0\leq k_M\leq n\) is even in this case.
We omit the details.
\end{proof}

\begin{cor}
\label{sec:isom-cohom-rings-2}
  Let \(M\) be a quasitoric manifold with the same cohomology as \(\prod_{i=1}^l \C P^{n_i}\), \(n_i>1\) for all \(i=1,\dots,l\).
Then \(M\) is weakly equivariantly homeomorphic to \(\prod_{i=1}^l \C P^{n_i}\).
\end{cor}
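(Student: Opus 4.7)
The strategy is to apply Theorem~\ref{sec:isom-cohom-rings} by constructing a ring isomorphism $f\colon H^*(M)\to H^*(\prod_{i=1}^l\C P^{n_i})$ that sends each Poincaré dual of a characteristic submanifold of $M$ to $\pm$ a Poincaré dual of one in $\prod\C P^{n_i}$, up to a permutation. As a first step, I would invoke a polytope-rigidity result (in the spirit of Theorem~5.3 of \cite{pre05797869} used in the proof of the previous corollary) to identify $M/T$ combinatorially with $\prod_{i=1}^l\Delta^{n_i}$; the hypothesis $n_i>1$ is required precisely here. I would then label the facets $F_{i,k}$ for $i\in\{1,\dots,l\}$, $k\in\{0,\dots,n_i\}$, denote the Poincaré duals by $u_{i,k}$, fix an identification $H^*(M)\cong\mathbb{Z}[a_1,\dots,a_l]/(a_1^{n_1+1},\dots,a_l^{n_l+1})$, and expand $u_{i,k}=\sum_j c_{i,k,j}a_j$.

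The key step is to exhibit a permutation $\tau$ of $\{1,\dots,l\}$ with $u_{i,k}=\pm a_{\tau(i)}$ for all $i,k$. Two kinds of relations drive the analysis. The Stanley--Reisner relation $\prod_{k=0}^{n_i}u_{i,k}=0$ (from $F_{i,0}\cap\dots\cap F_{i,n_i}=\emptyset$ in $\Delta^{n_i}$) places the degree-$(n_i+1)$ polynomial $\prod_k u_{i,k}$ inside the monomial ideal $(a_1^{n_1+1},\dots,a_l^{n_l+1})$ of $\Q[a_1,\dots,a_l]$; the Fermat-type non-factorisation fact that $\lambda a_j^d+\mu a_{j'}^d$ with $\lambda,\mu\neq 0$ and $d\geq 3$ has at most two linear factors over $\Q$ (activated by $d=n_i+1\geq 3$, which is why $n_i>1$ is crucial) severely restricts the admissible shape of this product. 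Separately, the top-dimensional pairing at each vertex of $\prod\Delta^{n_i}$, obtained by omitting one facet from each group via $\psi\colon i\mapsto\psi(i)\in\{0,\dots,n_i\}$, equals $\pm 1$; in the ring this reads $\prod_{(i,k):\,k\neq\psi(i)}u_{i,k}=\pm a_1^{n_1}\cdots a_l^{n_l}$. If some group $i$ mixed two variables $a_j,a_{j'}$, varying $\psi$ so as to omit a factor carrying the ``minority'' variable would raise the ``majority'' variable to a power exceeding its nilpotency threshold, sending the pairing to zero and contradicting its value $\pm 1$. Combining the two constraints yields $u_{i,k}=c_{i,k}a_{\tau(i)}$, and counting top-degree exponents forces $\tau$ to be a bijection with each $c_{i,k}=\pm 1$.

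After reorienting the characteristic submanifolds of $M$ to absorb the signs and composing with the permutation automorphism of $H^*(\prod\C P^{n_i})$ induced by $\tau$, I obtain the ring isomorphism required to apply Theorem~\ref{sec:isom-cohom-rings}, yielding the weakly equivariant homeomorphism. The hardest step is the combined algebraic analysis of the previous paragraph: the Fermat obstruction alone does not rule out non-monomial linear factors (for instance $a_1^{n_1+1}(a_1+a_2)^k$ lies in the ideal), so the vertex-pairing non-vanishing is essential for excluding mixed-variable configurations within each group.
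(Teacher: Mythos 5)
Your skeleton matches the paper's: identify \(M/T\) with \(\prod\Delta^{n_i}\) via the rigidity theorem of \cite{pre05797869}, expand the facet classes \(u_{i,k}\) in the ring generators, show each equals \(\pm a_{\tau(i)}\), and invoke Theorem~\ref{sec:isom-cohom-rings}. One of your two tools also matches: the Fermat-type irrationality appears in the paper as the observation that \(f_{ik_1}x^{n_i+1}+f_{ik_2}\) cannot have all roots rational when \(n_i+1\geq 3\). The gap is in your decisive step, the exclusion of mixed linear forms. The ``minority/majority'' argument only parses once every \(u_{i,k}\) is already known to be a scalar multiple of a single variable, which is exactly what you are trying to prove; for a genuinely mixed factor \(u_{i,k}=\alpha a_j+\beta a_{j'}\) there is no well-defined ``variable carried by'' that factor, and omitting it does not visibly annihilate the coefficient of \(a_1^{n_1}\cdots a_l^{n_l}\) in the remaining degree-\(n\) product, which is a sum over contributions from all groups that you would actually have to compute. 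Moreover, you have not characterised what Stanley--Reisner plus Fermat leaves standing: when several \(n_k\) are strictly smaller than \(n_i\), the product \(\prod_k u_{i,k}\) can a priori equal \(\sum_k f_{ik}a_k^{n_k+1}\) with several nonzero \emph{polynomial} coefficients \(f_{ik}\), the irrationality argument says nothing, and unique factorisation gives no immediate restriction; so the vertex-pairing step is left without a concrete configuration to refute. (A side quibble: \(n_i>1\) is not what forces the orbit polytope to be a product of simplices --- that holds for \(n_i=1\) too, as the Hirzebruch-type examples over the square in the paper's closing remark show; the hypothesis is needed only in the algebra.)

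For contrast, the paper closes exactly this gap by an elimination ordered by the sizes \(n_k\): writing \(\prod_j u_{ij}=\sum_k f_{ik}x_k^{n_k+1}\) with degree bounds on the \(f_{ik}\), it first kills \(\alpha_{ijk}\) for \(n_k>n_i\) by substitution, then uses irrationality plus unique factorisation to isolate a single \(\sigma(i)\) with \(n_{\sigma(i)}=n_i\) and kill the remaining coefficients with \(n_k\geq n_i\); for the hard case \(n_{k_1}<n_i\) it normalises \(\alpha_{ij\sigma(i)}=1\) and compares the coefficients of \(x_{\sigma(i)}^{n_i}x_{k_1}\) and \(x_{\sigma(i)}^{n_i-1}x_{k_1}^2\) (both absent on the right since \(n_{k_1}+1\geq 3\)), obtaining \(\sum_j\alpha_{ijk_1}=\sum_{j_1<j_2}\alpha_{ij_1k_1}\alpha_{ij_2k_1}=0\) and hence \(\sum_j\alpha_{ijk_1}^2=0\). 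The only vertex-type input it needs is that each complementary \(l\)-tuple \((u_{1j_1},\dots,u_{lj_l})\) is a basis of \(H^2(M)\), which yields bijectivity of \(\sigma\) and \(\alpha_{ij\sigma(i)}=\pm1\). Your vertex-pairing idea may well be completable, but as written it is a heuristic rather than a proof at precisely the point you yourself flag as hardest.
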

\begin{proof}
  We have
  \begin{equation*}
    H^*(M)=\mathbb{Z}[x_1,\dots,x_l]/(x_i^{n_i+1};\;i=1,\dots,l)
  \end{equation*}
with \(\deg x_i =2\).
By Theorem 5.3 of \cite[p. 353]{pre05797869}, we know that the orbit polytope of \(M\) is a product \(\prod_{i=1}^l\Delta^{n_i}\) of simplices.

For \(i=1,\dots,l\), \(j=1,\dots,n_i+1\), let \(u_{ij}\in H^2(M)\) be the Poincar\'e-dual of the characteristic submanifold 
\begin{equation*}
\pi^{-1}(\Delta^{n_1}\times\dots\times \Delta^{n_{i-1}}\times F_j\times \Delta^{n_{i+1}}\times\dots\times \Delta^{n_l}),  
\end{equation*}
where \(F_j\) is a facet of \(\Delta^{n_i}\).
Let \(\alpha_{ijk}\in \mathbb{Z}\) such that \(u_{ij}=\sum_{k=1}^l \alpha_{ijk}x_k\).

Because \(\prod_{j=1}^{n_i+1}u_{ij}=0\), we have
\begin{equation}
  \label{eq:prod}
  \prod_{j=1}^{n_i+1}\left(\sum_{k=1}^l \alpha_{ijk} x_k\right)=\sum_{k=1}^l f_{ik}x_k^{n_k+1}
\end{equation}
in \(\mathbb{Z}[x_1,\dots,x_l]\), where \(f_{ik}\in \mathbb{Z}[x_1,\dots,x_l]\) such that \(\deg f_{ik}\leq 2(n_i+1)-2(n_k+1)\).

By replacing those \(x_k\) by \(0\) in (\ref{eq:prod}) for which \(n_k<n_i\), we get
\begin{equation}
\label{eq:1}
  \prod_{j=1}^{n_i+1}\left(\sum_{k,\; n_k\geq n_i} \alpha_{ijk} x_k\right)=\sum_{k;\; n_k=n_i} f_{ik}x_k^{n_k+1}.
\end{equation}
Because the \(f_{ik}\) in this equation are integers, it follows that \(\alpha_{ijk}=0\) whenever \(n_k>n_i\).

Now assume that there are \(k_1,k_2\in \{1,\dots,l\}\), \(k_1\neq k_2\) such that \(f_{ik_1}\neq 0 \neq f_{ik_2}\) and \(n_{k_1}=n_{k_2}=n_i\). 
Then by replacing \(x_k\), for \(k\neq k_1,k_2\), by \(0\) and \(x_{k_2}\) by \(1\) in (\ref{eq:prod}) we get
\begin{equation*}
    \prod_{j=1}^{n_i+1}\left(\alpha_{ijk_1} x_{k_1}+\alpha_{ijk_2}\right)= f_{ik_1}x_k^{n_k+1} + f_{ik_2}.
\end{equation*}
But because \(n_k>1\), not all roots of the polynomial on the right hand side of this equation are rational.
This is a contradiction. 
Therefore for each \(i\in\{1,\dots,l\}\) there is a unique \(\sigma(i)\in\{1,\dots,l\}\) such that \(n_i=n_{\sigma(i)}\) and \(f_{ik}=0\) if \(k\neq \sigma(i)\) and \(n_k\geq n_i\).

Because \(\mathbb{Z}[x_1,\dots,x_l]\) is a factorial ring, we see by equation (\ref{eq:1}) that \(\alpha_{ijk}=0\), for all \(j\), if \(k\neq \sigma(i)\) and \(n_k\geq n_i\).
Because each tuple \((u_{1j_1},\dots,u_{lj_l})\) forms a basis of \(H^2(M)\), we see that \(\sigma\) is bijective and \(\alpha_{ij\sigma(i)}=\pm 1\).

After changing the omniorientation of \(M\), we may assume that all \(\alpha_{ij\sigma(i)}=1\).
Let \(k_1\in \{1,\dots,l\}\) such that \(n_{k_1}<n_i\). Then by replacing those \(x_k\) by \(0\) for which \(k\neq k_1,\sigma(i)\) we get from (\ref{eq:prod}):
\begin{equation*}
  \prod_{j=1}^{n_i+1}(x_{\sigma(i)} + \alpha_{ijk_1}x_{k_1})=f_{i\sigma(i)}x_{\sigma(i)}^{n_i+1} + f_{ik_1}x_{k_1}^{n_{k_1}+1}.
\end{equation*}
Because \(n_{k_1}>1\), we get by comparing coefficients that
\begin{gather*}
  \left(\sum_{1\leq j_1< j_2\leq n_i+1} \alpha_{ij_1k_1}\alpha_{ij_2k_1}\right) x_{\sigma(i)}^{n_i-1}x_{k_1}^2 =0,\\
  \left(\sum_{j=1}^{n_i+1}\alpha_{ijk_1}\right) x_{\sigma(i)}^{n_i}x_{k_1}=0.
\end{gather*}
Because
\begin{equation*}
  \sum_{j=1}^{n_i+1}\alpha_{ijk_1}^2 =  \left(\sum_{j=1}^{n_i+1}\alpha_{ijk_1}\right)^2 -2\sum_{1\leq j_1< j_2\leq n_i+1} \alpha_{ij_1k_1}\alpha_{ij_2k_1},
\end{equation*}
we see that \(\alpha_{ijk_1}=0\) for all \(j\).
Therefore we have \(u_{ij}=x_{\sigma(i)}\).
 Hence, the statement follows from Theorem \ref{sec:isom-cohom-rings}.
\end{proof}

\begin{remark}
  It follows from results of \cite{pre05797869} and \cite{1195.57060} that a quasitoric manifold with the same cohomology as a product of complex projective spaces is homeomorphic to a product of complex projective spaces.

The assumption in Corollary~\ref{sec:isom-cohom-rings-2} that \(n_i>1\) is necessary because there are infinitely many non-equivalent torus actions on \(\C P^1\times \C P^1\) \cite{0486.57016}.
\end{remark}

\section{Bordism}
\label{sec:bordism}

To state our second theorem we first fix some notation.
Let \(M\) be a  omnioriented quasitoric manifold.
By \cite[p. 446]{davis91:_convex_coxet} and \cite[p. 71]{1012.52021} there is an isomorphism of real vector bundles
\begin{equation*}
  TM\oplus \R^{2(m-n)}\cong L_1\oplus \dots\oplus L_m,
\end{equation*}
where the \(L_i\) are complex line bundles with
\begin{equation*}
  c_1(L_i)=u_i.
\end{equation*}
This isomorphism corresponds to a reduction  of structure group in the stable tangent bundle of \(M\) from \(O(2m)\) to \(T^m\).

Let \(g: M\rightarrow BO(2m)\) be  a classifying map for the stable tangent bundle of \(M\).
Furthermore let \(f_i:M\rightarrow BT^1\) be the classifying map of the line bundle \(L_i\).
Then the following diagram commutes up to homotopy:
\begin{equation*}
  \xymatrix{
    & BT^m \ar@{=}[r]\ar^{p_m}[d] & BT^1\times\dots\times BT^1\\
    M\ar^{\prod f_i}[ru]\ar[r]&BO(2m)}
\end{equation*}
where \(p_m\) is the natural fibration \cite[p. 77]{0794.55001}.
We may replace \(\prod f_i\) by a homotopic map \(f\) which makes the above diagram commutative.
By \(f\) there is  given a \((BT^m,p_m)\)-structure on the stable tangent bundle of \(M\) \cite[p. 14]{0181.26604}.
We denote by \(\Omega_n(BT^\infty,p)\) the bordism groups of the sequence
\begin{equation*}
  \xymatrix{
    \dots \ar[r]& BT^m \ar[rr]\ar^{p_m}[d] & & BT^{m+1}\ar[r]\ar^{p_{m+1}}[d]& \dots\\
    \dots \ar[r]& BO(2m) \ar[r] & BO(2m+1)\ar[r] & BO(2m+2)\ar[r]& \dots}
\end{equation*}

\begin{theorem}
\label{sec:bordism-1}
  Let \(M,M'\) be omnioriented quasitoric manifolds with \([M]=[M']\in \Omega_n(BT^\infty,p)\).
  Then \(M\) and \(M'\) are weakly \(T\)-equivariantly homeomorphic.
\end{theorem}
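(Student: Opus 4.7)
The plan is to reduce Theorem~\ref{sec:bordism-1} to Theorem~\ref{sec:isom-cohom-rings}. What I need is a bijection $\sigma:\{1,\dots,m\}\to\{1,\dots,m'\}$ together with a ring isomorphism $f:H^{*}(M)\to H^{*}(M')$ satisfying $f(u_i)=u'_{\sigma(i)}$.

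The first step is to translate the equality $[M]=[M']\in\Omega_{n}(BT^{\infty},p)$ into a statement about characteristic numbers. By standard Pontryagin--Thom theory, the $(BT^{\infty},p)$-bordism class of a manifold is detected by the integer valued characteristic numbers
\[
\langle c_{1}(L_{i_1})\cdots c_{1}(L_{i_n}),\,[M]\rangle,
\]
taken over all multi-indices $(i_1,\dots,i_n)$, where $(L_1,\dots,L_N)$ is the decomposition of the stable tangent bundle given by the $(BT^{N},p_{N})$-structure obtained after padding with trivial complex line summands. For a quasitoric manifold every $u_i=c_{1}(L_i)$ is nonzero, whereas every trivial summand contributes zero to every such number. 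Consequently the collection of nontrivial line bundles is intrinsically detected by the bordism class, so $m=b_{2}(M)+n=b_{2}(M')+n=m'$, and for suitable labelings realizing $[M]=[M']$ we obtain a bijection $\sigma$ with
\[
\langle u_{i_1}\cdots u_{i_n},[M]\rangle \;=\; \langle u'_{\sigma(i_1)}\cdots u'_{\sigma(i_n)},[M']\rangle
\]
for every multi-index.

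The second step is to define $f$ on generators by $f(u_i):=u'_{\sigma(i)}$ and verify that it extends to a ring isomorphism. Since $H^{*}(M)$ is torsion-free, concentrated in even degrees, and generated as a ring by $u_1,\dots,u_m$ (Davis--Januszkiewicz \cite{davis91:_convex_coxet}), Poincar\'e duality implies that a polynomial expression $P(u_1,\dots,u_m)\in H^{2k}(M)$ vanishes if and only if
\[
\langle P(u_1,\dots,u_m)\cdot u_{j_1}\cdots u_{j_{n-k}},\,[M]\rangle = 0
\]
for every choice of $(j_1,\dots,j_{n-k})$. Each such pairing is a characteristic number preserved by $\sigma$, so every relation valid in $H^{*}(M)$ translates into the corresponding relation in $H^{*}(M')$, and vice versa. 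Hence $f$ is a well-defined graded ring isomorphism with $f(u_i)=+u'_{\sigma(i)}$, and Theorem~\ref{sec:isom-cohom-rings} applies to yield a weakly $T$-equivariant homeomorphism $M\to M'$.

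I expect the main obstacle to be the first step: one must carefully invoke the detection of $(BT^{\infty},p)$-bordism by integer valued products of the first Chern classes of the line summands, and argue that the matching $\sigma$ extracted from $[M]=[M']$ can be chosen as a genuine bijection between the nontrivial line bundles of $M$ and $M'$ (independent of the auxiliary $N$ used to stabilize) rather than just a consistency of individual Chern numbers. Once this is secured, the Poincar\'e-duality argument showing that $f$ is well defined is essentially formal, because the cohomology is torsion-free and $H^{2}$-generated.
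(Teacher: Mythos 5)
Your proposal is correct, and it shares the paper's two essential ingredients --- bordism invariance of the $BT^{\infty}$-characteristic numbers and Poincar\'e duality over $\mathbb{Z}$ --- but it packages them differently. The paper does not invoke Theorem~\ref{sec:isom-cohom-rings} as a black box: it works entirely in degree $2$, first using the characteristic numbers with pairwise distinct indices to get $m=m'$ and the combinatorial equivalence of the orbit polytopes, then defining $\psi:H^{2}(M)\to H^{2}(M')$ on the basis $f^{*}(x_{n+1}),\dots,f^{*}(x_m)$ and using a commuting-diagram argument (Poincar\'e duality plus equality of characteristic numbers) to show $\psi(f^{*}(x_i))=f'^{*}(x_i)$ for $i\le n$ as well, which forces the characteristic matrices to coincide. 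Your well-definedness lemma --- a polynomial in the $u_i$ vanishes in $H^{2k}(M)$ iff all its pairings against complementary monomials vanish --- is exactly the paper's degree-$2$ argument run in all degrees at once; it buys you a genuine reduction to Theorem~\ref{sec:isom-cohom-rings} at the cost of proving more than is needed (only the $H^{2}$ data, i.e.\ the face poset and the characteristic matrix, actually matters). Two small points. First, you only need the easy direction ``bordant $\Rightarrow$ equal characteristic numbers''; the claim that the bordism class is \emph{detected} by these numbers is stronger and unnecessary. Second, the obstacle you flag about extracting $\sigma$ is not really there: an element of $\Omega_n(BT^{\infty},p)$ carries a fixed ordering of the line-bundle summands, so $\sigma$ is the identity on the set of coordinates appearing in some nonzero characteristic number (this also gives $m=m'$; your parenthetical $b_2(M)+n=b_2(M')+n$ is circular as written, but the coordinate-counting argument, which is what the paper uses, repairs it).
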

\begin{proof}
  We use the following notation.
  Let \(f:M\rightarrow BT^\infty, L_1,\dots,L_m\) as above and \(f':M'\rightarrow BT^\infty, L'_1,\dots,L'_{m'}\) analogous.
  Let \(\{F_1,\dots,F_m\}\) and  \(\{F'_1,\dots,F'_{m'}\}\) be the set of facets of \(M/T=P\) and \(M'/T=P'\), respectively.
  
  Furthermore let 
  \begin{equation*}
    H^*(BT^\infty)= \mathbb{Z}[x_1,x_2,x_3,\dots].
  \end{equation*}
  Then be have
  \begin{equation}
    \label{eq:bor1}
    f^*(x_i)=
    \begin{cases}
      c_1(L_i)&\text{if }i=1,\dots,m\\
      0 & \text{else}.
    \end{cases}
  \end{equation}
  Without loss of generality we may assume that \(m'\geq m\).
  Because bordant manifolds have the same characteristic numbers, for all \(i_1,\dots,i_n\in \{1,\dots,m'\}\) we get
  \begin{equation*}
    f^*(x_{i_1}\dots x_{i_n})[M]= f'^*(x_{i_1}\dots x_{i_n})[M'].
  \end{equation*}
  If the \(i_j\) are pairwise distinct then we have by~(\ref{eq:bor1})
  \begin{equation*}
     f^*(x_{i_1}\dots x_{i_n})[M]=
     \begin{cases}
       \pm 1 &\text{if } i_j \leq m \text{ and } F_{i_1}\cap\dots\cap F_{i_n}\neq \emptyset\\
       0 & \text{else}.
     \end{cases}
  \end{equation*}
Since this holds analogously for \(M'\) we get
\begin{align}
  m&=m',\\
  \label{eq:bor2}
  F_{i_1}\cap\dots\cap F_{i_n}=\emptyset &\Leftrightarrow F'_{i_1}\cap\dots\cap F'_{i_n}=\emptyset.
\end{align}
By~(\ref{eq:bor2}) \(P\) and \(P'\) are combinatorially equivalent.
An equivalence is given by
\begin{equation*}
  \bigcap_{i\in I}F_i\mapsto \bigcap_{i\in I}F'_i,\quad (I\subset \{1,\dots,m\}).
\end{equation*}
Without loss of generality we may assume that \(F_1\cap\dots\cap F_n\) is non-empty.
Then \(f^*(x_{n+1}),\dots,f^*(x_m)\) form a basis of \(H^2(M)\).
Similarly  \(f'^*(x_{n+1}),\dots,f'^*(x_m)\) form a basis of \(H^2(M')\).
Therefore there is an isomorphism
\begin{align*}
  \psi: H^2(M)&\rightarrow H^2(M')\\
  f^*(x_i)&\mapsto f'^*(x_i),\quad i>n.
\end{align*}

We claim that the following diagram commutes.
\begin{equation}
\label{eq:bor3}
  \xymatrix{
    H^{2n-2}(M)\ar^\cong[r]&\hom(H^2(M),\mathbb{Z})\\
    H^{2n-2}(BT^\infty)\ar@{>>}^{f^*}[u]\ar@{>>}_{f'^*}[d]\\
    H^{2n-2}(M')\ar^\cong_{x\mapsto \langle \cdot \cup x,[M']\rangle}[r]&\hom(H^2(M'),\mathbb{Z})\ar^\cong_{\psi^*}[uu]}
\end{equation}
Let \(x\in H^{2n-2}(BT^\infty)\). Then for \(i>n\) we have
\begin{align*}
  \psi^*(\langle \cdot \cup f'^*(x),[M']\rangle)(f^*(x_i))&=\langle f'^*(x_i) \cup f'^*(x),[M']\rangle\\
  &=\langle f^*(x_i) \cup f^*(x),[M]\rangle &&\text{by bordism}\\
  &=(\langle \cdot \cup f^*(x),[M]\rangle)(f^*(x_i)).
\end{align*}
Therefore the diagram commutes.
Now we have for \(i=1,\dots,n\) and \(x\in H^{2n-2}(BT^\infty)\):
\begin{align*}
  \langle \psi(f^*(x_i)) \cup f'^*(x),[M']\rangle &=\psi^*(\langle \cdot \cup f'^*(x),[M']\rangle)(f^*(x_i))\\
&=(\langle \cdot \cup f^*(x),[M]\rangle)(f^*(x_i))&&\text{by~(\ref{eq:bor3})}\\
&=\langle f^*(x_i) \cup f^*(x),[M]\rangle\\
&=\langle f'^*(x_i) \cup f'^*(x),[M']\rangle &&\text{by bordism}
\end{align*}
Because \(f'^*:H^{2n-2}(BT^\infty)\rightarrow H^{2n-2}(M')\) is surjective, it follows that
\begin{equation*}
  f'^*(x_i)=\psi(f^*(x_i)), \quad \text{for } i=1,\dots,n.
\end{equation*}
As in the proof of Theorem~\ref{sec:isom-cohom-rings} one sees that the characteristic matrices for \(M\) and \(M'\) are equal.
Therefore \(M\) and \(M'\) are weakly equivariantly homeomorphic.
\end{proof}

The map \(p_m:BT^m \rightarrow BO(2m)\) factors through \(BU(m)\).
Therefore an omniorientation of a quasitoric manifold \(M\) induces a stable almost complex structure on \(M\).

If, moreover, \(M=\C P^2\#\C P^2\) we may choose this stable almost complex structure in such a way that \(\langle c_2(M),[M]\rangle=2\).

\begin{cor}
\label{sec:bordism-2}
  Let \(M\) be a four-dimensional quasitoric manifold.
  Equip \(M\) and \(\C P^2\#\C P^2\) with the stable almost complex structures described above.
  If \(\chi(M)\leq 4\) and \([M]=[\C P^2\# \C P^2]\in \Omega(BU)\), then \(M\) is weakly \(T\)-equivariantly homeomorphic to \(\C P^2\#\C P^2\).
\end{cor}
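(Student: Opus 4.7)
The plan is to apply Theorem~\ref{sec:bordism-1} after pinning down the characteristic matrix $\Lambda$ of $M$ from the bordism hypothesis. Since complex bordism in dimension four is detected by Chern numbers, the hypothesis gives $\langle c_2(M),[M]\rangle=2$ and, via Hirzebruch's signature formula $\sigma=\tfrac{1}{3}(c_1^2-2c_2)$, $\sigma(M)=2$ and $\langle c_1^2(M),[M]\rangle=10$.

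First I would reduce to the case that the orbit polytope $P=M/T$ is a quadrilateral. For a four-dimensional quasitoric manifold, $\chi(M)$ equals the number of vertices of $P$, so $\chi(M)\leq 4$ leaves only the triangle or the quadrilateral. A quasitoric manifold over $\Delta^2$ is, up to weak equivariant homeomorphism, $\C P^2$ or $\overline{\C P^2}$, whose signatures $\pm 1$ contradict $\sigma(M)=2$; hence $P$ is a quadrilateral with $m=4$ facets.

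Next I would classify $\Lambda$. After cyclic reordering of the facets and identification of $N\cong\mathbb{Z}^2$ via $\bar\lambda(F_1),\bar\lambda(F_2)$, write $\Lambda=\begin{pmatrix}1&0&a&b\\0&1&c&d\end{pmatrix}$ with $a,d\in\{\pm 1\}$ and $ad-bc=\pm 1$ by non-singularity. From the Davis--Januszkiewicz relations $u_1=-au_3-bu_4$, $u_2=-cu_3-du_4$ together with $u_1u_3=u_2u_4=0$, every intersection number $\langle u_iu_j,[M]\rangle$ becomes a polynomial in $a,b,c,d$ times the orientation sign $\omega=\langle u_3u_4,[M]\rangle=\operatorname{sign}(\det(\bar\lambda(F_3),\bar\lambda(F_4)))\in\{\pm 1\}$; in particular $\langle c_2(M),[M]\rangle$ and $\langle c_1^2(M),[M]\rangle$ are explicit polynomials. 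A case analysis over the four sign choices of $(a,d)$, modulo the equivalences coming from omniorientation flips, cyclic relabelling, basis change in $N$, and the $D_4$-symmetry of the quadrilateral, then shows that the system $\langle c_2,[M]\rangle=2$, $\langle c_1^2,[M]\rangle=10$ has a unique weak equivalence class of solutions (a concrete representative being $a=d=1$ after orientation reversal, or $(a,b,c,d)=(-1,2,-1,1)$ with $\omega=1$, giving $bc=-2$ and $b-c=3$). The resulting intersection form on $H^2$ is positive-definite and unimodular, hence equals $\langle 1\rangle\oplus\langle 1\rangle$, so by Freedman the corresponding quasitoric manifold is $\C P^2\#\C P^2$.

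The Davis--Januszkiewicz lemma (equivalently, the characteristic-number comparison used in the proof of Theorem~\ref{sec:bordism-1}) now yields the weakly $T$-equivariant homeomorphism $M\simeq \C P^2\#\C P^2$. The main obstacle is the explicit classification of solutions to the Chern number equations, which requires handling the sign $\omega$ and the four sign cases for $(a,d)$ simultaneously; equivalently, one must verify that all characteristic numbers $\langle u_iu_j,[M]\rangle$ (not only their symmetric combinations $\langle c_2,[M]\rangle$ and $\langle c_1^2,[M]\rangle$) agree between $M$ and $\C P^2\#\C P^2$.
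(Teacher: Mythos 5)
Your overall strategy matches the paper's: reduce to the case that $P$ is a square, show that the bordism hypothesis pins down the full set of $BT^\infty$-characteristic numbers $\langle u_iu_j,[M]\rangle$ up to a relabelling of the facets, and then conclude by the argument of Theorem~\ref{sec:bordism-1} (your appeal to Freedman is unnecessary for this, and by itself would only give a non-equivariant homeomorphism). The difficulty is that the decisive step --- your assertion that ``a case analysis over the four sign choices of $(a,d)$, modulo the equivalences \dots shows that the system has a unique weak equivalence class of solutions'' --- is exactly the content of the corollary, and you do not carry it out; you even flag it yourself as ``the main obstacle.'' As written, the proposal therefore has a genuine gap at its central point: producing one representative $(a,b,c,d)=(-1,2,-1,1)$ with the right Chern numbers does not rule out other equivalence classes of solutions.

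The paper closes this gap without any enumeration of characteristic matrices, working directly with the intersection numbers. Since $\sigma(M)=2=b_2(M)$, the intersection form is positive definite, so $\langle u_i^2,[M]\rangle\geq 1$ for each $i$; together with $\sum_i\langle u_i^2,[M]\rangle=\langle p_1(M),[M]\rangle=6$, with $\langle u_iu_j,[M]\rangle=\pm 1$ for adjacent facets and $=0$ for opposite ones, and with $\sum_{i<j}\langle u_iu_j,[M]\rangle=\langle c_2(M),[M]\rangle=2$, the diagonal is $(1,1,2,2)$ or $(1,1,1,3)$ and the adjacent products are $(-1,1,1,1)$ up to ordering. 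The key observation is then: if two \emph{adjacent} facets $F_i,F_j$ both had $\langle u_i^2,[M]\rangle=\langle u_j^2,[M]\rangle=1$, then $u_i,u_j$ would be a basis of $H^2(M)$ in which every $\langle u_k^2,[M]\rangle=(\alpha\pm\beta)^2$ is a perfect square, contradicting the presence of a $2$ or $3$ on the diagonal. Hence the two facets with self-intersection $1$ are opposite, the diagonal must be $(1,1,2,2)$, and the entire matrix $(\langle u_iu_j,[M]\rangle)$ agrees with that of $\C P^2\#\C P^2$ after reordering the facets; since these numbers are the only bordism invariants used in the proof of Theorem~\ref{sec:bordism-1}, that proof applies verbatim. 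If you want to salvage your version, you would need to actually perform the $(a,b,c,d,\omega)$ case analysis and verify all of the $\langle u_iu_j,[M]\rangle$ (not just $c_1^2$ and $c_2$), as your last sentence correctly anticipates; the paper's perfect-square argument is the efficient way to do this.
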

\begin{proof}
  Because \(M\) is bordant to \(\C P^2\#\C P^2\), it has signature two.
  Therefore we have \(b_2(M)\geq 2\). 
  This implies that \(\chi(M)=4\) and that the intersection form of \(M\) is positive definite.
  Hence, the orbit polytope \(P\) of \(M\) is a square.
  Denote by \(F_1,\dots,F_4\) the facets of \(P\) such that \(F_i\cap F_j=\emptyset\) if and only if \(i-j\equiv 0 \mod 2\) and \(i\neq j\).
  Let \(u_i\in H^2(M)\), \(i=1,\dots,4\) be the Poincar\'e-duals, of \(\pi^{-1}(F_i)\).
Then we have
\begin{gather*}
  \sum_{i=1}^4 \langle u_i^2,[M]\rangle= \langle p_1(M),[M]\rangle =6,\\
  \sum_{1\leq i<j\leq 4} \langle u_iu_j,[M]\rangle = \langle c_2(M),[M]\rangle=2,\\
  \langle u_iu_j,[M]\rangle=
  \begin{cases}
    \pm 1& \text{if } i-j\equiv 1 \mod 2\\
    0&\text{if } i-j\equiv 0\mod 2 \text{ and } i\neq j,
  \end{cases}\\
 \langle u_i^2,[M]\rangle \geq 1 \text{ for } i=1,\dots,4.
\end{gather*}

Therefore we must have up to ordering
\begin{align*}
  (\langle u_1^2,[M]\rangle,\dots,\langle u_4^2,[M]\rangle)&=(1,1,2,2),(1,1,1,3),\\
  (\langle u_iu_j,[M]\rangle ;\; i-j\equiv 1 \mod 2)&=(-1,1,1,1).
\end{align*}

Assume that there are \(i,j\in \{1,\dots,4\}\) such that \(i-j\equiv 1\mod 2\) and
\begin{equation*}
  \langle u_i^2,[M]\rangle=\langle u_j^2,[M]\rangle =1.
\end{equation*}
Then \(u_i,u_j\) form a basis of \(H^2(M)\).
In particular, for \(k\in \{1,\dots,4\}-\{i,j\}\), there are \(\alpha,\beta \in \mathbb{Z}\) such that \(u_k=\alpha u_i + \beta u_j\).
 It follows that
\begin{align*}
  \langle u_k^2,[M]\rangle &= \alpha^2\langle u_i^2,[M]\rangle + \beta^2\langle u_j^2,[M]\rangle + 2\alpha\beta \langle u_i u_j, [M]\rangle\\
&=(\alpha \pm \beta)^2.
\end{align*}
This is a contradiction, because there is a \(k\) with \(\langle u_k^2,[M]\rangle = 2,3\).

Therefore we must have \(i-j\equiv 0\mod 2\) whenever
\begin{equation*}
  \langle u_i^2,[M]\rangle=\langle u_j^2,[M]\rangle =1.
\end{equation*}
This implies that
\begin{equation*}
   (\langle u_1^2,[M]\rangle,\dots,\langle u_4^2,[M]\rangle)=(1,1,2,2)
\end{equation*}
up to ordering.

Therefore it follows that \(M\) and \(\C P^2\#\C P^2\) have, after reordering the facets of \(P\), the same \(BT^\infty\)-characteristic numbers.
Because these numbers are the only bordism invariants which are used in the proof of Theorem~\ref{sec:bordism-1}, it follows that \(M\) is weakly \(T\)-equivariantly homeomorphic to \(\C P^2\#\C P^2\). 
\end{proof}

\begin{remark}
  Corollary \ref{sec:bordism-2} also follows from the classification given in \cite{0216.20202} and the results of \cite{0486.57016}.
\end{remark}

\section{GKM-Graphs}
\label{sec:gkm}
Now we introduce the notion of a \emph{GKM-graph} of a quasitoric manifold following \cite{1138.53315}.

Let \(M^{2n}\) be a quasitoric manifold and \(M^{1}=\{x\in M;\dim Tx =1\}\).
Then \(M^T\) consists of isolated points and \(M^{1}\) has dimension two.

Let also
\begin{align*}
  V&=\{p_1,\dots,p_e\}=M^T\\
  E&=\{e_1,\dots,e_N\}=\{\text{components of } M^{1}\}
\end{align*}
and for \(i=1,\dots,N\) let \(\bar{e}_i\) be the closure of \(e_i\) in \(M\).
Then we have:
\begin{enumerate}
\item \(\bar{e_i}\) is an equivariantly embedded copy of \(\C P^1\).
\item \(\bar{e_i}-e_i\) consists of two points out of \(V\).
\item for \(p\in V\) we have \(\#\{e_i; p\in \bar{e}_i\}=n\).
\end{enumerate}

Therefore \(V\) and \(E\) are the vertices and edges of a graph \(\Gamma_M\).

We get a labeling of the edges of \(\Gamma_M\) by elements of the weight lattice of \(T\) as follows:
Let \(p,q\in V \cap \bar{e}_i\) then the weights \(\alpha_p,\alpha_q\) of \(T_p\bar{e}_i,T_q\bar{e}_i\) coincide up to sign and we define
\begin{equation*}
  \alpha:e_i\mapsto \alpha_p.
\end{equation*}
Then \(\alpha\) is determined up to sign and is called the \emph{axial function}
on \(\Gamma_M\).

We call \(\Gamma_M\) together with the axial function \(\alpha\) the GKM-graph of \(M\).

Now let \(M\) be a quasitoric manifold over the polytope \(P\).
Let \(\Gamma_P\) be the graph which consists of the edges and vertices of \(P\).
Then we have
\begin{equation*}
  \Gamma_M=\Gamma_P.
\end{equation*}

\begin{theorem}
  Let \(M\) be a quasitoric manifold. Then \(M\) is determined up to equivariant homeomorphism by \((\Gamma_M,\alpha)\).
\end{theorem}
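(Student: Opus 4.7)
The plan is to reconstruct from the pair $(\Gamma_M,\alpha)$ the data to which the Davis--Januszkiewicz lemma applies, namely the combinatorial type of the orbit polytope $P$ together with the characteristic function $\lambda$.

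The first step is to recover $P$. Since $\Gamma_M=\Gamma_P$ is the $1$-skeleton of the simple polytope $P$, the theorem of Blind and Mani (with a short combinatorial proof by Kalai) asserts that the combinatorial type of $P$ is determined by this graph. In particular, the face lattice of $P$ is recovered: for every facet $F$ and every vertex $p$ of $P$, we know which $n-1$ of the $n$ edges at $p$ lie in $F$.

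The second step is to recover $\lambda$ from $\alpha$. Fix a facet $F$ and a vertex $p\in F$, and let $e_1,\dots,e_{n-1}$ be the edges at $p$ lying in $F$. In standard local coordinates around $p$ (where $M$ is modelled on $\C^n$ with a $T^n$-action reparametrised by $\bar\lambda$), the weights $\alpha(e_1),\dots,\alpha(e_{n-1})$ are linearly independent in the weight lattice of $T$ and cut out the desired subtorus as
\begin{equation*}
\lambda(F)=\{t\in T:\alpha(e_j)(t)=1\text{ for }j=1,\dots,n-1\}.
\end{equation*}
This description is manifestly insensitive to the sign ambiguity of $\alpha$ on each edge. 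Two different choices of vertex $p\in F$ give the same subtorus, because both describe the pointwise stabiliser of the characteristic submanifold $M_F$. Applying the Davis--Januszkiewicz lemma to $(P,\lambda)$ then yields $M$ up to equivariant homeomorphism.

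The main obstacle is the first step: extracting the combinatorial type of $P$ from the abstract graph $\Gamma_M$ invokes the non-trivial Blind--Mani--Kalai theorem. A more self-contained alternative would be to reconstruct the face lattice directly from the axial function, by identifying $2$-faces through each vertex $p$ as spanned by pairs of incident edges whose weights span a primitive rank-$2$ sublattice of the weight lattice, and then building up higher faces inductively via the GKM-compatibility of $\alpha$ along $2$-faces. For brevity I would prefer to cite Blind--Mani--Kalai.
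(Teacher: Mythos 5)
Your proposal is correct and follows essentially the same route as the paper: recover the combinatorial type of \(P\) from \(\Gamma_M\) via Blind--Mani(--Kalai), then recover \(\lambda(F)\) as the common kernel of the characters \(\chi^{\alpha(e)}\) for edges \(e\subset F\). The only cosmetic difference is that you justify one-dimensionality of this kernel via linear independence of the \(n-1\) weights at a single vertex (local standardness), whereas the paper intersects over all edges of \(F\) and argues via effectiveness of the action at a fixed point; both arguments are sound.
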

\begin{proof}
  At first we introduce some notation.
  For a Lie-group \(G\) we denote its identity component of by \(G^0\).

  By \cite[p. 287,296]{0634.52005} the combinatorial type of \(P\) is uniquely determined by \(\Gamma_M\).
So we have to show that the function \(\lambda\) is determined by \(\alpha\).

Let \(F\) be a facet of \(P\) then we define
\begin{equation*}
  \lambda'(F)=\left(\bigcap_{e\subset F; e \text{ edge of  }P} \ker \chi^{\alpha(e)}\right)^0,
\end{equation*}
where \(\chi^{\alpha(e)}\) denotes the one-dimensional \(T\)-representation with weight \(\alpha(e)\).
We claim that \(\lambda'(F)=\lambda(F)\).
It follows immediately from the definition of \(\lambda\) that \(\lambda(F)\subset \lambda'(F)\).
Therefore we have to show that \(\lambda'(F)\) is at most one-dimensional.

Let \(x\in \pi^{-1}(F)^T\). Then we have
\begin{align*}
  T_x\pi^{-1}(F)&= \bigoplus_{\pi(x)\in e; e\subset F} \chi^{\alpha(e)}\\
  N_x(\pi^{-1}(F),M)&=\bigoplus_{\pi(x)\in e; e\not\subset F} \chi^{\alpha(e)}
\end{align*}
Therefore we have
\begin{equation*}
  \ker T_x\pi^{-1}(F) = \bigcap_{\pi(x)\in e; e\subset F} \ker\chi^{\alpha(e)}
\end{equation*}
But if
\begin{equation*}
  \dim\ker T_x\pi^{-1}(F) \geq 2
\end{equation*}
then the intersection
\begin{equation*}
  \ker T_x\pi^{-1}(F)\cap\ker N_x(\pi^{-1}(F),M)
\end{equation*}
is at least one-dimensional.
This contradicts with the effectiveness of the torus-action on \(M\).
\end{proof}

\bibliography{diss}{}
\bibliographystyle{amsplain}
\end{document}